\theoremstyle{definition}
\newtheorem{thm}{Theorem}[section]
\newtheorem{lem}[thm]{Lemma}
\newtheorem{th-def}[thm]{Theorem-Definition}
\newtheorem{cor}[thm]{Corollary}
\newtheorem{defn-lem}[thm]{Definition-Lemma}
\newtheorem{prop}[thm]{Proposition}
\newtheorem{rem}[thm]{Remark}
\numberwithin{equation}{section}
\def \Q{{\mathbb Q}}
\def \C{{\mathbb C}}
\def \F{{\mathbb F}}
\def \Z{{\mathbb Z}}
\def\map#1.#2.{#1 \longrightarrow #2}
\def\rmap#1.#2.{#1 \dasharrow #2}
\DeclareMathOperator{\Pic}{Pic}
\DeclareMathOperator{\Spec}{Spec}
\DeclareMathOperator{\Spf}{Spf}
\def\fb#1.{\underset #1 \to \times}
\def\pr#1.{\Bbb P^{#1}}
\def\ring#1.{\mathcal O_{#1}}
\def\mlist#1.#2.{{#1}_1,{#1}_2,\dots,{#1}_{#2}}
\def\uloopr#1{\ar@'{@+{[0,0]+(-4,5)} @+{[0,0]+(0,10)}
@+{[0,0]+(4,5)}}
  ^{#1}}
\def\dloopr#1{\ar@'{@+{[0,0]+(-4,-5)} @+{[0,0]+(0,-10)}
@+{[0,0]+(4,-5)}}
  _{#1}}
\def\rloopd#1{\ar@'{@+{[0,0]+(5,4)} @+{[0,0]+(10,0)}
@+{[0,0]+(5,-4)}}
  ^{#1}}
\def\lloopd#1{\ar@'{@+{[0,0]+(-5,4)} @+{[0,0]+(-10,0)}
@+{[0,0]+(-5,-4)}}
  _{#1}}
\long\def\ignore#1{}
\long\def\ignore#1{#1}
\title{N\'{e}ron-Severi group preserving lifting of K3 surfaces and applications}
\author{Junmyeong Jang}
\date{}
\begin{document}

\maketitle
\vspace{0.3cm}
\begin{center}
{\Large
Abstract}
\end{center}
\small
For complex algebraic K3 surfaces, there are  criteria in terms of N\'{e}ron-Severi lattice to be a Kummer surface and to be an Enriques K3 surface. In this paper, using a N\'{e}ron-Severi group preserving lifting, we prove the same criteria hold for K3 surfaces of finite height over a field of odd characteristic. We also give a classification of K3 surfaces of Picard number 20 over a field of odd characteristic.

\vspace{0.3cm}
\normalsize

\medskip

      \section{Introduction}
      When $X$ is a complex algebraic K3 surface, the second integral singular cohomology group $H^{2}(X, \Z)$ is an integral lattice which is isomorphic to $U^{3} \oplus (-E_{8}) ^{2}$. Here $U$ is a hyperbolic unimodular lattice of rank 2 and $E_{8}$ is a  unimodular root lattice
      of rank 8. By Hodge theory, we may regard the global 2-forms of $X$, $H^{0}(X,\Omega _{X/\C}^{2})$ as a direct factor of $H^{2}(X,\C) = H^{2}(X, \Z) \otimes \C$. The N\'{e}ron-Severi group of $X$, $NS(X)$ is identified with the intersection of the orthogonal complement of $H^{0}(X,\Omega _{X/\C}^{2})$ and $H^{2}(X,\Z)$ inside $H^{2}(X,\C)$. We call the rank of $NS(X)$ the Picard number of $X$.
       Thanks to the Torelli theorem for complex K3 surfaces (\cite{PS}), we can express many geometric properties of complex K3 surfaces in terms of N\'{e}ron-Severi lattice.\\

      Let $Y$ be a complex Enriques surface.
      The free part of the N\'{e}ron-Severi group $Y$ admits a lattice structure
      $\Gamma = U \oplus -E_{8}$.
      We say $\Gamma$ is the Enriques lattice.
       We denote by $\Gamma (2)$ the lattice obtained from $\Gamma$ by multiplying the pairing by 2.
      Let $f: X \to Y$ be the K3 cover of $Y$ and $\iota : X \to X$ be the associated involution.
       We say a K3 surface is an Enriques K3 surface if it is the K3 cover of an Enriques surface.
       A K3 surface is an Enriques K3 surface if and only if it has a fixed point free involution.
            The covering map induces an embedding of lattices
      $$f^{*} : \Gamma (2) \hookrightarrow NS(X).$$
      Then $f^{*} \Gamma (2)$ is identified with the fixed part of $NS(X)$ of the involution (\cite{Na}, Proposition 2.3)
      $$ \iota ^{*} :NS(X) \to NS(X).$$
      Let $\Gamma (2) ^{\bot}$ be the orthogonal complement of $f ^{*} : \Gamma (2) \hookrightarrow NS(X)$.
      Then
      $$\Gamma (2) ^{\bot} = \{ x \in NS(X) | \iota ^{*}(x) = -x \}$$
      and $\Gamma (2) ^{\bot}$ is an even negative definite lattice.
       Assume the self intersection of a vector $x$ in $\Gamma (2) ^{\bot}$ is $-2$. By the Riemann-Roch theorem, either of $x$ or $-x$ is effective, but not both.
        However $x$ is effective if and only if $\iota ^{*}(x) = -x$ is effective. Therefore $\Gamma (2) ^{\bot}$ does not contain a vector of self intersection $-2$.
      Based on the Torelli theorem, the converse is also valid and we have a lattice criterion for a complex K3 surface to be an Enriques K3 surface.
      \\

      \vspace{0.2cm}
      \textbf{Theorem.}(\cite{Na}, Theorem 1.14, \cite{Ke}, Theorem 1)
      A complex K3 surface $X$ is an Enriques K3 surface if and only if there exists a primitive embedding of lattices
      $\Gamma(2) \hookrightarrow NS(X)$ such that the orthogonal complement of the embedding does not contain a vector of self intersection $-2$.\\

      \vspace{0.2cm}
      Now assume $X$ is a complex Kummer surface associated to an abelian surface $A$. Then $X$ is the blow up of 16 singular points of $A_{1}$-type of $A/(-id_{A})$.
      Let $I = \{ a_{1} , \cdots , a_{16} \}$ be the set of 2 torsion points of $A$. By the group structure of $A$, we may regard $I$ as a 4-dimensional vector space over $\F_{2}$, where $\F_{2}$ is a prime field of characteristic 2.
      Inside $I$,
      there are 30 hyperplanes and each hyperplane has 8 points. Let $Q$ be the subset of the power set of $I$ consisting of 30 hyperplanes, the empty set and $I$. For each $a_{i} \in I$, there is a smooth rational curve on $X$ corresponding to $a_{i}$. We denote this smooth rational curve by $a_{i}$ again.
      The classes of 16 rational curves $a_{i}$ are linearly independent in $NS(X)$. Let $M$ be the sublattice of $NS(X)$ generated by the 16 classes of $a_{i}$.
            For each $\alpha \in Q$, let $v_{\alpha} = \frac{1}{2}\sum _{a_{i} \in \alpha} a_{i} \in M \otimes \Q$. We set
      $$J = M + \sum_{\alpha \in Q} \Z \cdot v_{\alpha} \subset M \otimes \Q.$$
      Then $J$ is the saturation of $M$ in $NS(X)$ and $J/M = (\Z/2)^{5}$. We call $J$ the Kummer lattice.
      The following lattice criterion is also based on the Torelli theorem.
      \\

      \vspace{0.2cm}
      \textbf{Theorem.}(\cite{Ni}, Theorem 3)
      A complex K3 surface $X$ is a Kummer surface if and only if there exists a primitive embedding of the abstract lattice $J$ into $NS(X)$.\\

      \vspace{0.2cm}

      It is also known that every complex Kummer surface is an Enriques K3 surface (\cite{Ke}, Theorem 2).\\

      The Torelli theorem for complex K3 surfaces states that the isomorphism classes of algebraic complex K3 surfaces can be classified
      by the location of the line of global holomorphic 2 forms in the second complex singular cohomology. Especially for complex K3 surfaces of Picard number 20, we have a more precise classification in terms of transcendental lattice. The transcendental lattice of a complex K3 surface $X$ is the orthogonal complement of the embedding
      $$NS(X) \hookrightarrow H^{2}(X,\Z).$$
      We denote the transcendental lattice of $X$ by $T(X)$.
      For a complex K3 surface of Picard rank 20, $T(X)$ is an even positive definite lattice of rank 2. \\

      \vspace{0.2cm}
      \textbf{Theorem.}(\cite{IS}, Theorem 4)
      The correspondence
      $$X \mapsto T(X)$$
      gives a bijection between the set of isomorphism classes of complex K3 surfaces of Picard number 20 and the set of isomorphism classes of even positive definite lattices of rank 2. Every complex K3 surface of Picard number 20 has a model over a number field.
      \\

      \vspace{0.2cm}

      Over a field of positive characteristic, we do not have the Torelli theorem for K3 surfaces.
      Let $k$ be an algebraically closed field of positive characteristic $p$. Assume $X$ is a K3 surface over $k$.
      The formal Brauer group of $X$, $\widehat{Br}_{X}$ is a 1-dimensional smooth formal group over $k$.
      The height of $\widehat{Br}_{X}$ is an integer between 1 and 10 or infinite. We say the height of $X$ is the height of $\widehat{Br}_{X}$.\\

      A K3 surface of infinite height is also called a supersingular K3 surface.
      It is known that if the base characteristic is odd, the Picard number of
       a supersingular K3 surface is 22 (\cite{C}, \cite{MA}).
       It is also known that if the base characteristic is at least 5, a supersingular K3 surface is unirational (\cite{Li}).
       The discriminant group of the N\'{e}ron-Severi group of a supersingular K3 surface $X$, $(NS(X)^{*})/NS(X)$ is a $\sigma$-dimensional
       space over a prime field $\Z / p$ for $1 \leq \sigma \leq 10$.
       We call $\sigma$ the Artin invariant of $X$.
      For supersingular K3 surfaces over a field of odd characteristic, we have the crystalline Torelli theorem (\cite{Og2}).
      In a previous work (\cite{Ja}, Theorem 4.1), using the crystalline Torelli theorem, for a supersingular K3 surface in odd characteristic $p$,
      we proved the lattice criterion to be an Enriques K3 surface holds.
       By this result and some lattice
      calculation, we also proved that when $p \geq 23$, a supersingular K3 surface is an Enriques K3 surface if and only if the Artin invariant is less than $6$. It is known that a supersingular K3 surface over odd characteristic is a Kummer surface if and only if the Artin invariant is less than 3 (\cite{Og1}, Theorem 7.10).
      Therefore if $p \geq 23$, every supersingular Kummer surface is an Enriques K3 surface.
      In this paper, we use a deformation argument to prove that over any odd characteristic, a supersingular K3 surface is an Enriques K3 surface if and only if the Artin invariant is less than 6 (Corollary 2.4).\\

      For a K3 surface of finite height, we do not have a proper replacement of the Torelli theorem.
      However over a field of odd characteristic, any K3 surface of finite height has a smooth lifting over the ring of Witt vectors of the base field to which
      all the line bundles  can extend.\\

      \vspace{0.2cm}
      \textbf{Theorem.}(\cite{NO}, p.505,  \cite{LM}, Corollary 4.2)
      Let $k$ be an algebraically closed field of odd characteristic and $W$ be the ring of Witt vectors of $k$. Assume $X$ is a K3 surface of finite height defined over $k$.
      Then there exists a smooth lifting of $X$ over $W$, $\mathbb{X}/W$ such that the reduction map
      $\Pic (\mathbb{X}) \to NS(X)$ is surjective.
      \\

      \vspace{0.2cm}

      For such a lifting, the reduction map from the N\'{e}ron-Severi group of the generic fiber
      to the N\'{e}ron-Severi group of the special fiber, $X$ is an isomorphism.
      We say a lifting of $X$ satisfying this condition is a N\'{e}ron-Severi group preserving lifting of $X$.
      For a K3 surface of finite height, using a N\'{e}ron-Severi group preserving lifting, we may import
      some results on complex K3 surfaces expressed in terms of N\'{e}ron-Severi groups.\\

        In this paper, we use a N\'{e}ron-Severi group preserving lifting to prove the lattice criteria to be an Enriques K3 surface and to be a Kummer surface still hold
        for a K3 surface of finite height in odd characteristic. Also we prove a Kummer surface of finite height is an Enriques K3 surface.
        Therefore all these results hold for all the K3 surfaces over any filed of characteristic $p \neq 2$.

      \vspace{0.2cm}
      \textbf{Theorem 2.5.}
      Assume $k$ is an algebraically closed field of characteristic $p > 2$. A K3 surface $X$ over $k$ is an Enriques K3 surface
      if and only if there exists a primitive embedding
     $\Gamma(2) \hookrightarrow NS(X)$ such that the orthogonal complement of the embedding does not have a vector of self intersection $-2$.
     \\

     \vspace{0.2cm}

     \textbf{Theorem 2.6.}
     Assume $k$ is an algebraically closed field of characteristic $p > 2$. A K3 surface $X$ over $k$ is a Kummer surface if and only if
     there exists a primitive embedding of $J$ into $NS(X)$.\\

     \vspace{0.2cm}
      \textbf{Theorem 2.8.}
      Assume $k$ is an algebraically closed field of characteristic $p > 2$. A Kummer surface $X$ over $k$ is an Enriques K3 surface.\\

      \vspace{0.2cm}
      Over a field of odd characteristic, the Picard number of a K3 surface of finite height $h$ $(1 \leq h \leq 10)$
      is at most $22-2h$  (\cite{I1}, Proposition 5.12). Therefore
      if the Picard number of a K3 surface $X$ is 20, the height of $X$ is 1, in other words, $X$ is ordinary.
       Moreover in this case, there is a unique N\'{e}ron-Severi group preserving lifting of $X$, which is the canonical lifting of $X$ (\cite{N2}, Definition 1.9). By this fact and the classification of complex K3 surfaces of Picard number 20,
      we obtain a classification of K3 surfaces of Picard number 20 over a field of odd characteristic $p$.
      Let $S_{p}$ be the set of isomorphic classes of even positive definite lattices of rank 2 such that the discriminant is a non-zero square modulo $p$. For each $M \in S_{p}$, there is a unique complex K3 surface of Picard number 20, $X_{M}$ such that $T(X_{M})$ is isomorphic to $M$.
      $X_{M}$ is defined over $\bar{\Q}$.
      When $k$ is an algebraically closed field of characteristic $p$,
       $X_{M}$  has a good reduction over $k$. The reduction of $X_{M}$ over $k$ is a K3 surface of Picard number 20. In section 3, using N\'{e}ron-Severi group preserving lifting,
       we prove that this construction gives a classification of K3 surfaces of Picard number 20 over $k$.
      \\
      \vspace{0.2cm}

      \textbf{Theorem 3.7.}
      Let $k$ be an algebraically closed field of characteristic $p>2$.
    The isomorphism classes of K3 surfaces of Picard number 20 are classified by $S_{p}$.
      Every K3 surface of Picard number 20 over $k$ has a model over a finite field.\\

      \vspace{0.6cm}
    {\bf Acknowledgment}\\
        The author thanks to C.Liedtke for helpful comments.
        He is also grateful to the referee for pointing out mistakes in the early version of this paper.
        This research was supported by Basic Science Research Program through the National Research Foundation of Korea(NRF) funded by the Ministry of Education, Science and Technology(2011-0011428).

     \section{Kummer surfaces and Enriques surfaces}
      Let $k$ be an algebraically closed field of characteristic $p>2$. Let $W$ be the ring of Witt vectors of $k$ and $K$ be the fraction field of $W$.
     Assume $Y$ is an Enriques surface over $k$ and $f:X \to Y$ is the K3 cover of $Y$.
     Since $f$ is \'{e}tale,
     \begin{center}
     $f^{*} T_{Y/k} = T_{X/k} = \Omega ^{1}_{X/k}$ and $f_{*} \Omega _{X/k} ^{1} = \Omega _{Y/k}^{1} \oplus T_{Y/k}$,
     \end{center}
     so
     \begin{center}
     $H^{0}(Y, T_{Y/k}) = H^{2}(T,T_{Y/k})=0$ and
     $\dim H^{1}(Y, T_{Y/k})=10$.
     \end{center}
     It follows that the deformation space of $Y$ to $k$-Artin local algebras
     with residue field $k$
     is formally smooth of 10 dimension over $k$. Let $\mathcal{S} = \Spf k[[t_{1} , \cdots , t_{10}]] $ be the deformation space. Suppose $(A,m)$ is an Artin local $k$-algebra with residue field $A/m \simeq k$ and $Y_{A} \in \mathcal{S}(A)$ is a deformation of $Y$ over $A$.
     Let $i : Y \hookrightarrow Y_{A}$ be the canonical embedding.
     \begin{lem}\label{lem1}
     The canonical map $i ^{*} : \Pic(Y_{A}) \to \Pic (Y)$ is an isomorphism.
     \end{lem}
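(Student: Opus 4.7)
The plan is to use the standard exponential-type exact sequence on the étale site of $Y_A$, together with the vanishing of coherent cohomology in low degrees that characterizes an Enriques surface in characteristic $p\neq 2$. Note first that for $Y$ an Enriques surface in characteristic $p>2$ one has $2K_Y=0$ with $K_Y\neq 0$, hence $H^0(Y,\omega_Y)=0$, and Serre duality together with $\chi(\mathcal{O}_Y)=1$ give
\begin{equation*}
H^1(Y,\mathcal{O}_Y) = 0, \qquad H^2(Y,\mathcal{O}_Y) = 0.
\end{equation*}
These are the only inputs I need on $Y$ itself.

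The key exact sequence of sheaves of abelian groups on the étale site of $Y_A$ (identified topologically with $Y$) is
\begin{equation*}
0 \to 1 + m_A \mathcal{O}_{Y_A} \to \mathbb{G}_{m,Y_A} \to \mathbb{G}_{m,Y} \to 0.
\end{equation*}
Taking cohomology, the lemma will follow once I show that $H^1(Y, 1+m_A\mathcal{O}_{Y_A}) = 0$ and $H^2(Y, 1+m_A\mathcal{O}_{Y_A}) = 0$.

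To prove these vanishings I would use dévissage along the $m_A$-adic filtration. Since $A$ is artin local, pick $n$ with $m_A^{n+1}=0$, and consider the filtration by the subsheaves $1 + m_A^i\mathcal{O}_{Y_A}$, $i=1,\dots,n$. For each $i$ the truncated-logarithm map $1+x\mapsto x$ identifies
\begin{equation*}
(1 + m_A^i\mathcal{O}_{Y_A})/(1 + m_A^{i+1}\mathcal{O}_{Y_A}) \;\cong\; m_A^i/m_A^{i+1} \otimes_k \mathcal{O}_Y
\end{equation*}
as abelian sheaves on $Y$; this is where characteristic $p>2$ plus $m_A^{i+1}=0$ at the appropriate step is used to make the logarithm well-defined on each successive quotient. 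Hence the cohomology of each graded piece is a finite direct sum of copies of $H^j(Y,\mathcal{O}_Y)$, which vanishes for $j=1,2$ by the first paragraph.

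Inducting up the filtration via the long exact cohomology sequence of
\begin{equation*}
0 \to 1 + m_A^{i+1}\mathcal{O}_{Y_A} \to 1 + m_A^i\mathcal{O}_{Y_A} \to m_A^i/m_A^{i+1}\otimes_k\mathcal{O}_Y \to 0
\end{equation*}
yields $H^1(Y,1+m_A\mathcal{O}_{Y_A})=H^2(Y,1+m_A\mathcal{O}_{Y_A})=0$, and hence $i^*:\operatorname{Pic}(Y_A)\to\operatorname{Pic}(Y)$ is an isomorphism. The only mild subtlety is organizing the induction cleanly (and justifying the logarithm identification on successive quotients); neither is a genuine obstacle since both are standard for smooth deformations of a proper smooth variety, and no feature of $Y$ is used beyond the cohomological vanishings above.
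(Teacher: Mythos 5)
Your proof is correct and follows essentially the same route as the paper: the paper also takes cohomology of the sequence $0 \to 1+m\mathcal{O}_{Y_A} \to \mathbb{G}_{m,Y_A} \to \mathbb{G}_{m,Y} \to 0$ and kills $H^1$ and $H^2$ of the kernel by observing that it is an iterated extension of finitely many copies of $\mathcal{O}_Y$, whose $H^1$ and $H^2$ vanish for an Enriques surface in odd characteristic. You merely spell out the $m_A$-adic dévissage that the paper leaves implicit.
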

     \begin{proof}
    Let us consider the exact sequence
     $$ 0 \to 1+m\mathcal{O}_{Y_{A}} \to \mathbb{G} _{m, Y_{A}} \to \mathbb{G} _{m,Y} \to 0.$$
     Since $A$ is a finite algebra over $k$, $H^{2} (Y, 1+m\mathcal{O}_{Y_{A}})$ is an iterative extension of finite copies of $H^{2}(Y, \mathcal{O}_{Y})$,
     so $H^{2}(Y , 1+ m\mathcal{O}_{Y})=0$. By the same reason, $H^{1}(Y, 1+ m\mathcal{O}_{Y}) =0$.
     Hence $H^{1}(Y_{A}, \mathbb{G} _{m,Y_{A}}) \to H^{1}(Y, \mathbb{G} _{m,Y})$ is an isomorphism.
     \end{proof}
     Let $\mathcal{Y} \to \mathcal{S}$ be the universal family over the deformation space $\mathcal{S}$. By Lemma \ref{lem1},
     all the line bundles of $Y$ extend to $\mathcal{Y}$. In particular, an ample line bundle extends to $\mathcal{Y}$ and $\mathcal{Y} \to
     \mathcal{S}$ is algebraizable. Let $S = \Spec k[[t_{1}, \cdots , t_{10}]]$ and $Y_{S} \to S$ be the algebraic model of the formal
     scheme $\mathcal{Y} \to \mathcal{S}$. If $f _{S} : X_{S} \to Y_{S}$ is the K3 cover of $Y_{S}$, $\pi : X_{S} \to S$ is a family of
     Enriques K3 surfaces over $S$.
     \begin{lem}[c.f. \cite{MN}, p.383]\label{lem2}
     If $X$ is a supersingular K3 surface of Artin invariant 1 over $k$, $X$ is an  Enriques K3 surface.
          \end{lem}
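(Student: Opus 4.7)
The plan is to construct an explicit Enriques involution on one specific supersingular K3 surface of Artin invariant $1$. Since Ogus's crystalline Torelli theorem for supersingular K3 surfaces implies that the supersingular K3 surface of Artin invariant $1$ is unique up to isomorphism over an algebraically closed field of characteristic $p > 2$, it is enough to exhibit a single model with a fixed-point-free involution.

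The concrete model I would use is the Kummer surface $X = \mathrm{Km}(E \times E)$, where $E$ is a supersingular elliptic curve over $k$. By Shioda's and Ogus's results, for $p > 2$ this Kummer surface is supersingular with Artin invariant $1$, so it is the canonical model we need. The construction of the involution proceeds as follows. Choose two distinct nonzero $2$-torsion points $P, Q \in E[2]$, and define
\[
\sigma : E \times E \longrightarrow E \times E, \qquad \sigma(x,y) = (-x + P,\; y + Q).
\]
The idea is to show that $\sigma$ descends through the Kummer quotient and resolution to a fixed-point-free involution of $X$.

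The verification proceeds in three steps. First, $\sigma$ is an involution: $\sigma^2(x,y) = (x - P + P,\, y + 2Q) = (x,y)$ since $2P = 2Q = 0$. Second, $\sigma$ commutes with the $[-1]$-involution on $E \times E$, because $[-1]\sigma(x,y) = (x-P, -y-Q) = (x+P, -y+Q) = \sigma[-1](x,y)$ using $-P = P$ and $-Q = Q$; hence $\sigma$ descends to an involution $\bar\sigma$ on $A/[-1]$. Third, $\bar\sigma$ has no fixed points on $A/[-1]$: a fixed point would require $\sigma(x,y) = \pm(x,y)$, which forces either $Q = 0$ or $P = 0$, contradicting the choice of $P, Q$. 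Finally, $\bar\sigma$ lifts to an involution $\tilde\sigma$ on the resolution $X$: the $16$ singular points of $A/[-1]$ correspond to $E[2] \times E[2]$, and $\sigma$ acts on this set by translation by $(P,Q) \neq 0$, which is free with $8$ orbits of size $2$, so $\tilde\sigma$ permutes the $16$ exceptional $(-2)$-curves in pairs without preserving any of them setwise.

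The main subtlety I would watch out for is the Artin-invariant claim for $\mathrm{Km}(E \times E)$. Whereas showing $\sigma$ descends to a fixed-point-free involution on $X$ is essentially a direct verification, identifying the resulting Kummer surface as the (unique) supersingular K3 of Artin invariant $1$ requires appealing to the explicit computation of the Néron–Severi lattice of $\mathrm{Km}(E \times E)$ (e.g.\ via the Shioda–Inose or Ogus lattice description) and checking that its discriminant equals $-p^2$. Once these ingredients are in place, combining with Ogus's uniqueness theorem transfers the existence of an Enriques involution from our explicit Kummer model to the abstract supersingular K3 surface $X$ of Artin invariant $1$ in the statement.
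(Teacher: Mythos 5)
Your proposal is correct and follows essentially the same route as the paper: realize the (unique, by Ogus) supersingular K3 surface of Artin invariant $1$ as $\mathrm{Km}(E\times E)$ for a supersingular elliptic curve $E$, and descend an involution of the form $(x,y)\mapsto(-x+P,\,y+Q)$ with nonzero $2$-torsion twists to a fixed-point-free involution of the Kummer surface. The only (immaterial) difference is that the paper takes $P=Q=a$ for a single nonzero $2$-torsion point $a$, while you take two distinct ones; your write-up is simply more explicit about the fixed-point and exceptional-curve verifications that the paper leaves to the reader.
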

     \begin{proof}
     Let $E$ be a supersingular elliptic curve over $k$. Then $X$ is isomorphic to the Kummer surface of the abelian surface $E \times E$ (\cite{Og1}, Corollary 7.14).
     Assume $a$ is a non-zero 2-torsion point of $E$. An involution
     $$ E \times E \to E \times E,\ (x,y) \mapsto (-x+a, y+a)$$
     commutes with $-id _{E \times E}$ and induces a fixed point free involution on $X$. Hence $X$ is an Enriques K3 surface.
     \end{proof}
     Assume $Y$ is an Enriques surface whose K3 cover $X$ is a supersingular K3 surface of Artin invariant 1. For the family of Enriques K3 surfaces
     $\pi : X_{S} \to S$, let us consider a stratification on $S$,
\begin{equation}\label{fil}
     S=M_{1} \supset M_{2} \supset \cdots \supset M_{10} \supset M_{11} = \Sigma _{10} \supset \Sigma _{9} \supset \cdots \supset \Sigma _{1}.
\end{equation}
     Here $M_{i}$ is the locus of fibers of height at least $i$ and $\Sigma _{i}$ is the locus of supersingular fibers of Artin invariant
     at most $i$. All $M_{i}$ and $\Sigma _{i}$ are closed subset of $S$ and, for the simplicity, we assume they are all reduced.
     Each step in the stratification is defined by one equation,
     so the dimension decreases by at most 1 for each step (\cite{A},\textrm{ p.563}).
     Since the central fiber of
     $\pi$ is $X$, the closed point of $S$ is contained in $\Sigma _{1}$.
     On the other hands,
     a K3 surface of Artin invariant 1 is unique up to isomorphism. Hence $\Sigma _{1}$ is zero-dimensional and consists of one point.
     It follows that there are exactly
     10 steps of dimension down on (\ref{fil}). If an Enriques K3 surface is of finite height, the height is at most 6 and if an Enriques K3 surface is supersingular, the Artin invariant is at most 5 (\cite{Ja}, Proposition 3.5).
     Therefore $M_{6} =M_{10}$ and $\Sigma _{10} = \Sigma _{5}$, and
     if we shorten the stratification (\ref{fil}) as
     $$S= M_{1} \supset M_{2} \supset \cdots \supset M_{6} \supset \Sigma _{5} \supset \Sigma _{4} \supset \cdots \supset \Sigma _{1},$$
     each step is of codimension 1.
     \begin{thm}\label{thm1}
           For each $h$ $(1 \leq h \leq 6)$, there exists an Enriques K3 surface of height $h$ over $k$. For each $\sigma$ $(1 \leq \sigma \leq 5)$, there exists a supersingular Enriques K3 surface of Artin invariant $\sigma$ over $k$.
     \end{thm}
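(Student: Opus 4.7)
The plan is to harvest the stratification already produced in the discussion leading up to the theorem,
$$S = M_1 \supset M_2 \supset \cdots \supset M_6 \supset \Sigma_5 \supset \Sigma_4 \supset \cdots \supset \Sigma_1,$$
and to extract the existence statements by a pigeonhole on dimensions. Three facts are in hand from the preceding paragraphs. First, the chain consists of eleven terms and hence ten successive inclusions. Second, each inclusion is cut out by a single equation, so the codimension drops by at most one at each step. Third, the central point of $S$ lies in $\Sigma_1$ by Lemma 3.4, while the rigidity of Enriques involutions on the supersingular K3 lattice of Artin invariant $1$ forces $\dim \Sigma_1 = 0$ (the corresponding Enriques surfaces are classified by a discrete set of lattice involutions).

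The argument is then a clean pigeonhole. Since $\dim S = 10$ and $\dim \Sigma_1 = 0$, the ten successive codimension drops must sum to exactly ten, so each drop is exactly one. In particular every stratum in the chain is non-empty and has its expected dimension. It follows that for each $1 \leq h \leq 6$ the locally closed locus of fibers of height exactly $h$, namely $M_h \setminus M_{h+1}$ (with the convention $M_7 = \Sigma_5$), is non-empty, and for each $1 \leq \sigma \leq 5$ the locally closed locus $\Sigma_\sigma \setminus \Sigma_{\sigma-1}$ (with $\Sigma_0 = \emptyset$) of supersingular fibers of Artin invariant exactly $\sigma$ is non-empty.

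To convert these formal strata into honest Enriques K3 surfaces over $k$, I would pass from the formal base $\Spec k[[t_1,\ldots,t_{10}]]$ to a finite-type algebraic model of the effective versal deformation of $Y$; the stratification descends, and any $k$-point of a non-empty stratum yields an Enriques surface whose K3 cover has the prescribed invariant, with the Enriques involution inherited from the family $Y_S \to S$. The main technical burden, I expect, is not the dimension bookkeeping but rather verifying that each inclusion in the chain is strict, equivalently that each defining equation is not identically zero on the previous stratum; this is reduced by the clean count above to a single structural input, the value $\dim \Sigma_1 = 0$, which is itself secured by the rigidity argument for supersingular Enriques K3 surfaces of Artin invariant $1$.
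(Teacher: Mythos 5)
Your proposal is correct and follows essentially the same route as the paper: the same dimension pigeonhole on the ten inclusions of the collapsed stratification (anchored by $\dim S=10$ and $\Sigma_1$ being a single point), followed by spreading out from the formal base to a finite-type model to produce a surface over $k$ itself (the paper does this by taking a point $x$ of the stratum and an integral model over a finite-type $k$-algebra $B\hookrightarrow k(x)$, invoking Grothendieck specialization to pick a closed fiber).
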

     \begin{proof}
     We choose a point $x \in M_{h} - M_{h+1}$ or $x \in M_{6} - \Sigma _{5}$ when $h=6$. The fiber over $x$,
     $X_{x} = X_{S} \times _{S} k(x)$ is an Enriques K3 surface of height $h$ over $k(x)$.
     Let $X_{B} \to \Spec B$ be an integral model of $X_{x} \to \Spec k(x)$ with the Enriques involution where
     $B$ is an integral domain of finite type over $k$ with an imbedding
     $B \hookrightarrow k(x)$ such that $X_{B} \otimes k(x)$ is isomorphic to $X_{x}$. We may assume every fiber of $X_{B} \to \Spec B$
     is an Enriques K3 surface of height $h$
     considering a stratification similar to (\ref{fil}) on $\Spec B$.
     Hence a closed fiber of $X_{B} \to \Spec B$ is an Enriques K3 surface of height $h$ over $k$.
     In a similar way, we can construct a supersingular Enriques K3 surface of Artin invariant $\sigma$ for $1\leq \sigma \leq5$.
     \end{proof}
In a previous work (\cite{Ja}, Corollary 4.7), we proved that if the characteristic of the base field is $p \geq 23$, a supersingular K3 surface has an Enriques
involution if and only if the Artin invariant is less than 6.
It is also known that if one supersingular K3 surface of Artin invariant $\sigma$ over $k$ is an Enriques K3 surface, then every supersingular K3 surface of Artin invariant $\sigma$ over $k$ is supersingular (\cite{Ja}, Remark 4.4). By Theorem 2.3, we obtain the following result in any odd characteristic.

\begin{cor}\label{sup}
     Let $k$ be an algebraically closed field of odd characteristic.
     A supersingular K3 surface over $k$ is an Enriques K3 surface if and only if the Artin invariant is less than 6.
     \end{cor}
     Now we prove that, for a K3 surface of finite height in odd characteristic, the lattice criteria to be an Enriques K3 surface and the lattice criterion to be a Kummer surface hold
               We recall from Section 1 that $\Gamma$ is the Enriques lattice and $J$ is the Kummer lattice.
           \begin{thm}\label{Enr}
     Assume $k$ is an algebraically closed field of characteristic $p > 2$. A K3 surface $X$ over $k$ is an Enriques K3 surface
      if and only if there exists a primitive embedding
     $\Gamma(2) \hookrightarrow NS(X)$ such that the orthogonal complement of the embedding does not have a vector of self intersection $-2$.
     \end{thm}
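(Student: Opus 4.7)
The plan is to split according to whether $X$ has finite height or is supersingular. In the supersingular case, the Néron--Severi lattice is determined up to isomorphism by the Artin invariant $\sigma$, and the existence of a primitive embedding $\Gamma(2) \hookrightarrow NS(X)$ with no $(-2)$-vector in its orthogonal complement is a purely lattice-theoretic condition on $\sigma$. Following the analysis of \cite{Ja}, this forces $\sigma \leq 5$, and then the Corollary just proved in Section 3 supplies an Enriques involution on $X$. So I only need to address the finite-height case in detail, where the idea is to lift, apply the characteristic-zero criterion of \cite{Ke}, \cite{Ni}, and descend.

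Assume $X$ has finite height. I would first choose a Néron--Severi group preserving lifting $\mathbb{X} \to \Spec W$ as given by the theorem of Nygaard--Ogus and Lieblich--Maulik recalled in Section 2, with generic fiber $X_K = \mathbb{X} \otimes_W K$. The reduction isomorphism $NS(X_K) \xrightarrow{\sim} NS(X)$ transports the hypothesized primitive embedding $\Gamma(2) \hookrightarrow NS(X)$ to a primitive embedding $\Gamma(2) \hookrightarrow NS(X_K)$ whose orthogonal complement is again free of $(-2)$-vectors. Next I would fix an embedding $K \hookrightarrow \mathbb{C}$ and base change to obtain a complex K3 surface $X_{\mathbb{C}}$; choosing $\mathbb{X}$ to correspond to a very general point of the formal deformation locus on which the Néron--Severi lattice is preserved, one has $NS(X_{\mathbb{C}}) = NS(X_K)$, so the same primitive embedding persists on $X_{\mathbb{C}}$ with the same negative definite orthogonal complement. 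The Nikulin/Keum criterion then produces an Enriques involution $\iota_{\mathbb{C}}$ on $X_{\mathbb{C}}$.

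The main obstacle is the descent of $\iota_{\mathbb{C}}$ to an honest involution on $X$ itself. My plan is the following. First descend $\iota_{\mathbb{C}}$ to an involution $\iota_K$ of $X_K$ (possibly after a finite unramified extension of $W$, which does not affect the special fiber). Since $\iota_K$ acts on $NS(X_K) \cong NS(X)$ fixing the class $f^{*}h$ of an $\iota_K$-invariant ample divisor that by construction extends across $\mathbb{X}$, a Matsusaka--Mumford type argument shows that $\iota_K$ spreads out to an involution $\iota_{\mathbb{X}}$ of the projective $W$-scheme $\mathbb{X}$; reducing modulo $p$ yields an involution $\iota$ of $X$. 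Finally, to see that $\iota$ is fixed-point-free, note that the fixed locus of $\iota_{\mathbb{X}}$ is a closed subscheme of $\mathbb{X}$, proper over $\Spec W$, whose generic fiber is empty because $\iota_K$ is an Enriques involution; by properness this fixed locus is itself empty, so its special fiber---namely the fixed locus of $\iota$ on $X$---is empty. Thus $\iota$ is an Enriques involution on $X$, completing the proof. The delicate points to check carefully in writing this up are (i) that the Néron--Severi preserving lift may indeed be arranged so that $NS(X_{\mathbb{C}}) = NS(X_K)$, and (ii) that the extension of $\iota_K$ to $\mathbb{X}$ goes through with $\iota_K$ preserving a class coming from $X$.
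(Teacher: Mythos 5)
Your overall route is the same as the paper's: dispose of the characteristic-zero and supersingular cases by citation, and in the finite-height case lift via a N\'eron--Severi group preserving model, apply the Keum--Nikulin criterion to the characteristic-zero fiber, and descend the resulting involution to $\mathbb{X}\otimes V$ (after a finite extension $V$ of $W$) by Matsusaka--Mumford, then reduce to get an involution $\bar g$ of $X$. One small point: you do not need to arrange a ``very general'' lift to secure $NS(X_{\mathbb{C}})=NS(X_K)$; for any N\'eron--Severi preserving lift the composite $NS(X_K)\hookrightarrow NS(X_{\bar K})\hookrightarrow NS(X)$ is an isomorphism by construction and the second map is injective, so $NS(X_K)=NS(X_{\bar K})$ automatically.

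The genuine gap is in your final step. From ``the fixed locus $Z\subset\mathbb{X}\otimes V$ is closed and proper over $\Spec V$ with empty generic fiber'' you cannot conclude $Z=\emptyset$: properness only forces the image of $Z$ in $\Spec V$ to be closed, so $Z$ may perfectly well be a nonempty closed subscheme concentrated over the closed point --- the special fiber $X$ itself is proper over $\Spec V$ and has empty generic fiber. To salvage this you must also know that $Z$ is flat over $V$; that does hold here, because the fixed-point scheme of an involution on a scheme smooth over $V$ is again smooth over $V$ when $2$ is invertible on the base, and smooth $+$ proper $+$ empty generic fiber does force emptiness. As written, though, ``by properness this fixed locus is itself empty'' is a non sequitur. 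The paper sidesteps the issue entirely: it verifies that $\bar g^*$ acts as $-1$ on $H^0(X,\Omega^2_{X/k})$ (by propagating the action through $\pi_*\Omega^2_{\mathbb{X}\otimes V/V}$) and fixes a rank-$10$ sublattice of $NS(X)$, and deduces from this that $\bar g$ is an Enriques involution, rather than arguing directly with the fixed locus of the integral model. Either repair works, but your argument needs the smoothness/flatness input to be correct.
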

     \begin{proof}
     The only if part comes from the Riemann-Roch theorem.
     For the supersingular case, we refer to \cite{Ja}, Theorem 4.1.
     Assume $X$ is of finite height and there is an embedding $\Gamma (2) \hookrightarrow NS(X)$ satisfying the given condition. We fix a N\'{e}ron-Severi group preserving lifting of $X$
     over $W$, $\pi : \mathbb{X} \to \Spec W$. Let $X_{K}$ be the generic fiber of $\mathbb{X}/W$.
     Since $NS(X) = NS(X _{K})$ and $K$ is a field of characteristic 0, $X_{K} \otimes \bar{K}$ has
     an Enriques involution $g$. We assume $g$ is defined over a finite extension $L$ of $K$ and $V$ is the ring of integers of
     $L$. Then $g$ extends to $\mathbb{X} \otimes V$ and $\bar{g} = g|X$ is an involution on $X$ (\cite{MM}, p.672, Corollary 1).
Because $g^{*}|H^{0}(X_{K},
     \Omega _{X_{K}/K} ^{2}) = -1$, $g^{*}|\pi _{*} \Omega ^{2}_{\mathbb{X} \otimes V /V}=-1$ and $\bar{g} ^{*}|H^{0}(X, \Omega _{X/k} ^{2})=-1$. Also $\bar{g}^{*}$ fixes a rank 10 sublattice of $NS(X)$. Hence $\bar{g}$ is an Enriques
     involution of $X$.
     \end{proof}

     \begin{thm}
     Assume $k$ is an algebraically closed field of characteristic $p > 2$. A K3 surface $X$ over $k$ is a Kummer surface if and only if
     there exists a primitive embedding of $J$ into $NS(X)$.
     \end{thm}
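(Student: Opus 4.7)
The plan is to follow the same lifting strategy as in Theorem \ref{Enr}. The forward direction is classical: if $X = \operatorname{Km}(A)$ for an abelian surface $A$ over $k$, the sixteen exceptional $(-2)$-curves resolving the nodes, together with the half-sum classes $v_\alpha$ coming from the \'etale double cover $A \setminus A[2] \to X \setminus (\text{exceptional locus})$, realize a primitive embedding $J \hookrightarrow NS(X)$ in the standard way. For the converse, the case of characteristic zero is Nikulin's theorem (\cite{Ni}), and the case when $X$ is supersingular can be handled by the crystalline Torelli theorem exactly as in \cite{Ja}, so the essential new content concerns $X$ of finite height.

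For that case, I would fix a Neron-Severi group preserving lifting $\pi \colon \mathbb{X} \to \Spec W$ of $X$ and set $X_K = \mathbb{X} \otimes_W K$. Using the reduction isomorphism $NS(X_K) \cong NS(X)$, the hypothesized primitive embedding $J \hookrightarrow NS(X)$ transfers to a primitive embedding $J \hookrightarrow NS(X_K)$. By Nikulin's criterion applied to $X_{\bar K}$, there exists an abelian surface $A_{\bar K}$, defined over some finite extension $L$ of $K$ with ring of integers $V$, such that $X_{\bar K} \simeq \operatorname{Km}(A_{\bar K})$.

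Next I would descend the Kummer structure. The sixteen pairwise disjoint smooth rational curves $E_1,\dots,E_{16}$ on $X_K$ that resolve the images of the $2$-torsion points of $A_{\bar K}$ correspond via the reduction isomorphism to sixteen classes in $NS(X)$. By Lemma \ref{rat} each of these classes is again nodal on $X$, and since intersection numbers are preserved, the corresponding curves on $X$ are pairwise disjoint. The half-sum classes $v_\alpha$ prescribed by the Kummer lattice sit in $NS(X)$ by construction, so contracting the sixteen $(-2)$-curves yields a surface $\bar X$ with sixteen rational double points of type $A_1$ which admits a $2$-to-$1$ cover branched exactly over those points; the cover is an abelian surface $A$ over $k$, and $X \simeq \operatorname{Km}(A)$.

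The main obstacle is ensuring that the abelian surface $A_{\bar K}$ has good reduction to $k$, so that the above double cover really arises as the special fiber of a Kummer family. My approach is to carry out the contraction-and-cover construction integrally over $V$: contract the sixteen disjoint families of $(-2)$-curves in $\mathbb{X} \otimes V$ to obtain a model of $A_{\bar K}/\{\pm 1\}$, and then use the half-sum classes in $NS(\mathbb{X})$ (which lift because of the Neron-Severi preserving property of $\mathbb{X}$) to construct a finite double cover whose generic fiber is $A_{\bar K}$; smoothness of this cover over $V$ would follow from $\mathbb{X}$ being smooth over $W$ together with the absence of wild ramification in odd characteristic. The special fiber of this smooth model is the desired abelian surface $A$ with $\operatorname{Km}(A) \cong X$. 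An alternative route, if the direct construction proves technical, is to verify good reduction of $A_{\bar K}$ via the Neron--Ogg--Shafarevich criterion using that $H^1_{\mathrm{\acute et}}(A_{\bar K}, \Q_\ell)$ is controlled by the (unramified) Galois representation on the transcendental part of $H^2_{\mathrm{\acute et}}(X_{\bar K}, \Q_\ell)$.
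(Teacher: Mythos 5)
Your treatment of the finite-height case is essentially the paper's: fix a Neron--Severi group preserving lifting, transfer the primitive embedding $J\hookrightarrow NS(X)\cong NS(X_K)$, invoke Nikulin to see that $X_{\bar K}$ is Kummer, reduce the sixteen disjoint nodal classes via Lemma \ref{rat}, and recover $A$ by the double-cover-and-contract construction. The one place you diverge is the end: you treat good reduction of $A_{\bar K}$ as the main obstacle and propose either an integral contraction over $V$ or a Neron--Ogg--Shafarevich argument, whereas the paper sidesteps this entirely by performing the construction directly on the special fiber over $k$ --- take the double cover $X'\to X$ branched along $\sum \bar C_i$ (which exists because $\frac12\sum \bar C_i\in NS(X)$), observe the preimages are $(-1)$-curves, blow them down, and check that the resulting surface with trivial canonical class is abelian. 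Your route is heavier than necessary (and the Neron--Ogg--Shafarevich variant needs care, since inertia acting trivially on $\wedge^2H^1$ only controls $H^1$ up to sign), but the direct construction you also sketch is exactly what the paper does, so this is not a defect.

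The genuine gap is the supersingular case. You dismiss it with ``handled by the crystalline Torelli theorem exactly as in \cite{Ja}'', but \cite{Ja} concerns Enriques involutions, and the crystalline Torelli theorem by itself does not convert a primitive embedding of $J$ into a Kummer structure. The paper's actual argument is lattice-theoretic: by \cite{Og1} a supersingular $X$ is Kummer if and only if its Artin invariant is $1$ or $2$; since $J\otimes\Z_p$ is a unimodular $\Z_p$-lattice of rank $16$ with square discriminant, the result of \cite{Sh2} shows it cannot embed into $NS(X)\otimes\Z_p$ when the Artin invariant exceeds $2$, so the hypothesized embedding forces the Artin invariant to be at most $2$ and hence $X$ to be Kummer. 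Some argument of this kind is needed; as written, your proposal has no proof in the supersingular case.
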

     \begin{proof}
     For the only if part, we refer to \cite{Ni}, Section 1.
     When $X$ is supersingular, $X$ is a Kummer surface if and only if the Artin invariant of $X$ is 1 or 2
     (\cite{Og1}, Theorem 7.10). Assume the Artin invariant of $X$ is greater than 2. Since $J \otimes \Z_{p}$ is a unimodular $\Z _{p}$-lattice of rank
     16 with a square discriminant, by \cite{Sh2}, Theorem 1.1, there is no embedding of $J \otimes \Z _{p}$ into $NS(X) \otimes \Z _{p}$.
     Therefore if there is an embedding of $J$ into $NS(X)$, $X$ is a supersingular Kummer surface.
     Suppose $X$ is of finite height and there is a primitive embedding of $J$ into $NS(X)$. Let $\mathbb{X}$ be a N\'{e}ron-Severi group preserving lifting of $X$ over $W$ and $X_{K}$ be the generic fiber of $\mathbb{X}/ W$. Since there is a primitive embedding
     $$J \hookrightarrow NS(X) = NS(X_{K}),$$
      $X_{K}$ is a Kummer surface. Therefore $X_{K}$ contains 16 mutually disjoint smooth rational curves $C_{1}, \cdots , C_{16}$ satisfying $\frac{1}{2} \sum C_{i} \in NS(X_{K})$.

     \begin{lem}[c.f. \cite{LM}, Lemma 2.3]
     A class $ v \in NS(X)$ is effective on $X$ if and only if it is effective on $X_{K}$.
     A class $v \in NS(X)$ represents a smooth rational curve if and only if $v \in NS(X_{K})$ represents a smooth rational curve.
          \end{lem}
     \begin{proof}
     Let us fix a class $h \in NS(X)$
     which is ample on both of $X$ and $X_{K}$.
     Assume $v \in NS(X)$ is an effective class on $X$ and $D$ is a curve represented by $v$.
     We assume $D = \sum n_{i} C_{i}$ for integral curves $C_{i}$. Let $w_{i}$ be the class in $NS(X)$ which
     represents $C_{i}$. Then $(h,w_{i}) >0$ and $(w_{i}, w_{i}) \geq -2$ by the adjunction formula.
     By the Riemann-Roch theorem, $w_{i}$ is an effective class in $NS(X_{K})$, so $v= \sum n_{i}w_{i}$ is also effective in $NS(K_{X})$.
     The converse follows by the same way. This also asserts that $ v \in NS(X)$ is indecomposable effective if and only if $v \in NS(X _{K})$ is
     indecomposable effective. A class $v \in NS(X)$ represents a smooth rational curve if and only if $(v,v)=-2$ and $v$ is indecomposable effective.
     Then $v \in NS(X_{K})$ also represents a smooth rational curve. The converse follows by the same way.
     \end{proof}

     By Lemma 2.7, the reduction of each $C_{i}$ is a smooth rational curve in $X$. We denote the reduction of $C_{i}$ by $\bar{C} _{i}$.
     Let $X' \to X$ be the double cover ramified along the 16 rational curves $\bar{C}_{i}$. The preimage of each $\bar{C}_{i}$ in $X'$ is a ($-1$)-curve. Let $A$ be the surface obtained by blowing down the 16 ($-1$)-curves of $X'$.
     It can be checked that $A$ is an abelian surface and $X$ is the Kummer surface of $A$.
     \end{proof}

     \begin{thm}
     Assume $k$ is an algebraically closed field of characteristic $p > 2$. A Kummer surface $X$ over $k$ is an Enriques K3 surface.
     \end{thm}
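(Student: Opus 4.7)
The plan is to mimic the structure of the proof of Theorem 3.5, splitting into two cases according to the height of the Kummer surface $X$ and reducing to known results plus the lifting machinery of Section 2.

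If $X$ is supersingular, then by the theorem of Ogus recalled in the proof of Theorem 3.6, the Artin invariant $\sigma_0(X)$ must be $1$ or $2$. Since $\sigma_0(X)\le 5$, the Corollary following Theorem 3.4 immediately produces an Enriques involution on $X$, so nothing more is needed in this case.

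If $X$ has finite height, I would fix a Neron-Severi group preserving lifting $\pi\colon\mathbb{X}\to\Spec W$ constructed in Section 2, and set $X_K=\mathbb{X}\otimes K$. The reduction map $NS(X_K)\to NS(X)$ is an isomorphism, so the primitive embedding $J\hookrightarrow NS(X)$ supplied by Theorem 3.6 lifts to a primitive embedding $J\hookrightarrow NS(X_K)$. By Nikulin's theorem \cite{Ni}, $X_K$ is then a Kummer surface over a field of characteristic zero, and Keum's theorem \cite{Ke} provides an Enriques involution $g$ on $X_K$. After replacing $K$ by a finite extension $L$ with ring of integers $V$, I may assume $g$ is defined over $L$.

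The final step copies the end of the proof of Theorem 3.5 almost verbatim: by \cite{MM}, $g$ extends to an involution of $\mathbb{X}\otimes V$ and induces an involution $\bar g$ of $X$; since $g^{*}=-1$ on $\pi_{*}\Omega^{2}_{\mathbb{X}\otimes V/V}$ the reduction $\bar g^{*}$ acts as $-1$ on $H^{0}(X,\Omega^{2}_{X/k})$, and $\bar g^{*}$ fixes a rank-$10$ sublattice of $NS(X)$ (the specialization of the $g^{*}$-invariants in $NS(X_K)$). I anticipate that the only subtle point is verifying that $\bar g$ is fixed-point-free, which is the main technical obstacle; the plan is to deduce it from the fixed-point-freeness of $g$ on the generic fibre, since any fixed point of $\bar g$ on $X$ would, by smoothness of $\mathbb{X}\otimes V$ and Hensel's lemma applied to the closed fixed subscheme of the $V$-involution, lift to a fixed point of $g$ on $X_K$, a contradiction.
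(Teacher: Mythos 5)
Your proposal is correct and follows essentially the same strategy as the paper: dispose of the supersingular case by the earlier results, and in the finite-height case use a Néron--Severi group preserving lifting to transfer the primitive embedding of the Kummer lattice to $NS(X_K)$, conclude via Nikulin and Keum that $X_K$ is a Kummer surface carrying an Enriques involution, and descend to $X$. The only divergence is in the closing step: rather than extending Keum's involution over $V$ and reducing it (which forces you to address fixed-point-freeness of $\bar g$ directly --- your Hensel argument needs the additional input that the fixed locus of an involution of order prime to the residue characteristic is smooth, hence flat, over $V$, so that nonemptiness on the special fibre propagates to the generic fibre), the paper simply notes that the Enriques involution on $X_K$ furnishes a primitive embedding $\Gamma(2)\hookrightarrow NS(X_K)=NS(X)$ whose orthogonal complement contains no vector of self-intersection $-2$, and then invokes Theorem 3.5, thereby deferring all such verifications to that result.
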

     \begin{proof}
     If $X$ is a supersingular Kummer surface, the Artin invariant of $X$ is at most 2. Then $X$ is an Enriques K3 surface by Corollary \ref{sup}.
     Assume $X$ is a Kummer surface of finite height.
     Let $\pi : \mathbb{X} \to \Spec W$ be a N\'{e}ron-Severi group preserving lifting of $X$ and $X_{K}$ be the generic fiber of $\pi$.
     Since $X$ is a Kummer surface, there exists a primitive embedding
     $$J \hookrightarrow NS(X) = NS(X _{K})$$
     and
     $X_{K}$ is a Kummer surface. Since $X_{K}$ is defined over a field of characteristic 0, $X_{K}$ is an Enriques K3 surface and there exists
     a primitive embedding
     $$\Gamma \hookrightarrow NS(X_{K}) = NS(X)$$
      such that the orthogonal complement does not contain a vector of
     self intersection $-2$. By theorem \ref{Enr}, $X$ is an Enriques K3 surface.
     \end{proof}

     \section{Classification of K3 surfaces of Picard number 20}
     A singular K3 surface is a complex K3 surface of Picard number 20. For a complex K3 surface $X$,
     we denote the transcendental lattice of $X$ by $T(X)$. The transcendental lattice of a singular K3 surface is an even
     positive definite lattice of rank 2.
     Conversely it is known that for an even positive definite lattice of rank 2, $M$ there exists
     a unique singular K3 surface $X_{M}$ up to isomorphism such that $T(X_{M})$ is isomorphic to $M$ (\cite{IS}, Theorem 4).
     Every singular K3 surface has a model over a number field.\\

     In this section we assume $k$ is an algebraically closed field of characteristic $p>2$ and the cardinality of $k$ is equal to or less than the
     cardinality of $\C$. Let $W$ be the ring of Witt vectors of $k$ and $K$ be the fraction field of $W$. We fix an isomorphism
     $\bar{K} \simeq \C$. Let $X$ be a K3 surface of Picard number 20 over $k$. Then $X$ is an ordinary K3 surface, in particular $X$ is of finite height.

     \begin{lem}\label{uni}
     Assume $X$ is a K3 surface of Picard number 20 over $k$. Then $NS(X) \otimes \Z _{p}$ is an unimodular $\Z _{p}$-lattice of square discriminant.
     \end{lem}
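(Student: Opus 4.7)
The plan is to reduce the statement to a computation over $\C$ via the Neron-Severi group preserving lifting constructed in Section 2, and then combine Nikulin's theory of primitive embeddings with the CM description of singular K3 surfaces.

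First, I would observe that the hypothesis $\rho(X)=20$ forces $X$ to be ordinary. Indeed, $X$ cannot be supersingular (where $\rho=22$), so it has finite height $h$, and since the crystalline cycle class map lands in the slope $1$ part of $H^{2}_{\mathrm{cris}}(X/W)$, which has rank $22-2h$, one has $\rho(X)\le 22-2h$; the hypothesis forces $h=1$. By Theorem 2.2 we may fix a Neron-Severi group preserving lifting $\mathbb{X}\to\Spec W$ of $X$, and the geometric generic fiber $X_{\C}=\mathbb{X}\otimes\C$ (via our fixed isomorphism $\bar K\simeq\C$) is then a singular K3 surface with $NS(X_{\C})\cong NS(X)$. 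It thus suffices to prove that $NS(X_{\C})\otimes\Z_{p}$ is unimodular with square discriminant.

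The next step applies Nikulin's theory. Both $NS(X_{\C})$ and its orthogonal complement $T=T(X_{\C})$ are primitive sublattices of the unimodular K3 lattice $H^{2}(X_{\C},\Z)$, so their discriminant forms are canonically anti-isometric as finite quadratic forms; in particular $|\det NS(X_{\C})|=|\det T|$. Comparing signatures $(1,19)$ and $(2,0)$ gives $\det NS(X)=-\det T$, so the problem reduces to showing that $p\nmid\det T$ and that $-\det T$ is a nonzero square modulo $p$.

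The final step invokes the CM structure on $T$. The Hodge structure on $T$ is of CM type with imaginary quadratic CM field $E=\Q(\sqrt{d_{E}})$, and $\det T$ equals $-d_{E}$ up to multiplication by the square of the conductor $f$ of the CM order of $T$ in $\mathcal{O}_{E}$. Since $X$ has good ordinary reduction at $p$, the classical CM theory of singular K3 surfaces (in the spirit of Shioda--Inose) says that $p$ must split completely and unramified in $E$; this means simultaneously that $p\nmid d_{E}f$ and that $d_{E}$ is a nonzero square in $\F_{p}$. Combining, $-\det T=d_{E}f^{2}$ is a nonzero square in $\Z_{p}^{*}$, and hence so is $\det NS(X)$.

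The main obstacle in carrying out this plan is the clean citation of the equivalence between ordinary reduction of the K3 surface and total splitting of $p$ in the CM field, along with the precise relation $\det T\sim -d_{E}$ up to rational squares; these are classical for singular complex K3 surfaces but require some care in the singular K3 setting rather than the better-known elliptic curve setting. Once they are in place, the Nikulin step and the lattice computation are routine.
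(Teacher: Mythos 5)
Your overall route is genuinely different from the paper's: the paper never leaves characteristic $p$. It uses flat duality to show $H^3_{fl}(X,\mu_p)=0$, deduces that $Br_X$ is $p$-divisible so that $T_p(Br_X)$ is free, and then, since $\rho(X)=20$ equals the $\Z_p$-rank of $H^2(X,\Z_p(1))$ for an ordinary K3, concludes from Illusie's exact sequence $0\to NS(X)\otimes\Z_p\to H^2(X,\Z_p(1))\to T_p(Br_X)\to 0$ that $NS(X)\otimes\Z_p\simeq H^2(X,\Z_p(1))$, whose pairing is unimodular of square discriminant by Ogus. Your reduction to the singular K3 surface $X_{\C}$ via the Neron--Severi preserving lifting, and the Nikulin step $\det NS=-\det T$, are both fine.

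The gap is in the final step. Writing $-\det T=d_Ef^2$ with $d_E$ the fundamental discriminant of the CM field $E$ and $f$ the conductor of the CM order, what ordinarity of the reduction actually controls (via Shioda--Inose and Deuring) is the splitting behaviour of $p$ in the \emph{field} $E$: ordinary reduction is equivalent to $p$ split in $E$, which gives $p\nmid d_E$ and $d_E$ a nonzero square mod $p$. It says nothing about the \emph{conductor} $f$, and your assertion that good ordinary reduction forces $p\nmid d_Ef$ is false for a general singular K3 surface: take elliptic curves with CM by the order of conductor $p$ in an imaginary quadratic field in which $p$ splits; the associated singular K3 surface has good ordinary reduction, yet $p^2\mid\det T$, so its Neron--Severi lattice is not $p$-unimodular. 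What saves the situation in that example is that the specialization map on $NS$ fails to be surjective (the reduced curves acquire the maximal order by Deuring, producing new cycles), so that surface is not the Neron--Severi preserving lift of its reduction. In other words, the statement you need is not ``ordinary $\Rightarrow p\nmid f$'' but ``ordinary and $NS(X_{\C})\xrightarrow{\sim}NS(X)$ $\Rightarrow p\nmid f$'', and that implication is essentially the content of the lemma itself; proving it by CM methods would require tracking how the conductor of the endomorphism ring changes under reduction and transporting that through the Shioda--Inose diagram, none of which is the ``classical'' statement you cite. As written, the step $p\nmid\det T$ is unjustified, and this is exactly the point where the paper's $p$-adic cohomological argument does the work.
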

     \begin{proof}
     Since the height of $X$ is 1, the flat cohomology
     $H^{2}_{fl}(X,\Z _{p}(1))$ is a free $\Z_{p}$-module of rank 20.
     In the exact sequence (\cite{I1}, p.629)
     \begin{center}
     $ 0 \to NS(X) \otimes \Z _{p} \to H^{2}_{fl}(X, \Z _{p}(1)) \to T_{p}(Br_{X}) \to 0$,
     \end{center}
     $T_{p}(Br_{X})$ is a free $\Z_{p}$-module and the rank of $NS(X) \otimes \Z_{p}$ is equal to the rank of $H^{2}_{fl}(X,\Z_{p}(1))$.
     Therefore $T_{p}(Br_{X})=0$ and $NS(X) \otimes \Z _{p} \simeq H^{2}(X, \Z _{p}(1))$. Because the cup product pairing of
     $H^{2}(X, \Z _{p}(1))$ is unimodular of square discriminant (\cite{Og2}, Remark 1.5), so is the pairing of $NS(X) \otimes \Z _{p}$.

     \end{proof}
     \begin{lem}\label{pri}
     Let $L$ be a local field of mixed characteristic $(0,p)$ and $k$ be an algebraic closed extension of the residue field of $L$.
     Let $X_{L}$ be a K3 surface defined over $L$.
     Assume $X_{L}$ has a good reduction over the residue field.
     Let $X$ be the the reduction of $X_{L}$ over $k$.
      Then the embedding
     $$NS(X_{L} \otimes \bar{L}) \otimes \Z_{l} \hookrightarrow NS(X \otimes k) \otimes \Z_{l}$$
     is primitive
     for any prime number $l \neq p$.

     \end{lem}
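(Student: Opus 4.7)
My plan is to use the cycle class map into $\ell$-adic étale cohomology together with the smooth proper base change theorem. Fix a prime $\ell \neq p$ and let $\mathcal{X} \to \Spec \mathcal{O}_F$ be a smooth proper model of $X_F$ with special fiber $X$. Applying the Kummer sequence on the étale sites of the two geometric fibers and passing to the $\ell$-adic inverse limit yields short exact sequences
\begin{equation*}
0 \to NS(Y) \otimes \Z_\ell \to H^{2}_{\mathrm{et}}(Y, \Z_\ell(1)) \to T_\ell(Br(Y)) \to 0
\end{equation*}
for $Y = X_F \otimes \bar{F}$ and for $Y = X \otimes k$, in perfect parallel with the argument used in Lemma 4.2. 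Each rightmost term is a Tate module and hence torsion-free, so the image of the cycle class map is a primitive $\Z_\ell$-submodule of the corresponding $H^2$.

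Next, smooth proper base change for $\mathcal{X}$ over the strict Henselisation $\mathcal{O}_F^{\mathrm{sh}}$ supplies a canonical specialization isomorphism
\begin{equation*}
\mathrm{sp}_\ell : H^{2}_{\mathrm{et}}(X_F \otimes \bar{F}, \Z_\ell(1)) \xrightarrow{\sim} H^{2}_{\mathrm{et}}(X \otimes k, \Z_\ell(1)).
\end{equation*}
After possibly enlarging $F$, every line bundle on $X_F \otimes \bar{F}$ extends to $\mathcal{X}$ and restricts to a line bundle on $X \otimes k$, giving the specialization map $\iota : NS(X_F \otimes \bar{F}) \to NS(X \otimes k)$. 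The naturality of the Kummer sequence under étale pullback ensures that the cycle classes of $\iota$ on the two sides are matched by $\mathrm{sp}_\ell$.

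With this framework set up, the primitivity assertion follows from a short diagram chase. Suppose $v \in NS(X \otimes k) \otimes \Z_\ell$ satisfies $nv = \iota(w)$ for some positive integer $n$ and some $w \in NS(X_F \otimes \bar{F}) \otimes \Z_\ell$. Let $c$ denote the cycle class map into $H^{2}_{\mathrm{et}}(-, \Z_\ell(1))$ and set $v'' := \mathrm{sp}_\ell^{-1}(c(v))$. By compatibility, $n v'' = c(w)$, so $n v''$ lies in the image of $c$ from $NS(X_F \otimes \bar{F}) \otimes \Z_\ell$. Since this image has torsion-free cokernel $T_\ell(Br(X_F \otimes \bar{F}))$, already $v''$ is in the image, say $v'' = c(u)$ with $u \in NS(X_F \otimes \bar{F}) \otimes \Z_\ell$. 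Then $c(\iota(u)) = \mathrm{sp}_\ell(c(u)) = \mathrm{sp}_\ell(v'') = c(v)$, and injectivity of $c$ on $NS \otimes \Z_\ell$ forces $\iota(u) = v$, so $v$ lies in the image of $\iota$, as required.

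The main obstacle will be the careful verification of the compatibility between $\mathrm{sp}_\ell$ and the Néron–Severi specialization map under the cycle class construction. This is a standard consequence of the functoriality of the Kummer sequence under étale pullback together with the construction of smooth proper base change, but one must take care to first pass to a finite extension of $F$ over which each relevant line bundle genuinely extends to a model.
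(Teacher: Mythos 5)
Your argument is correct, and its skeleton coincides with the paper's: both proofs pass through the cycle class map into $\ell$-adic \'etale cohomology, identify $H^{2}_{\acute{e}t}(X_{F}\otimes\bar{F},\Z_{\ell}(1))$ with $H^{2}_{\acute{e}t}(X\otimes k,\Z_{\ell}(1))$ by smooth proper base change compatibly with specialization of line bundles, and then deduce primitivity of $NS(X_{F}\otimes\bar{F})\otimes\Z_{\ell}$ inside $NS(X\otimes k)\otimes\Z_{\ell}$ from primitivity of the former inside the common $H^{2}$ (your diagram chase is just the explicit form of the paper's one-line observation that a submodule of a torsion-free quotient is torsion-free). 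Where you genuinely diverge is in the source of that last primitivity: the paper invokes the comparison isomorphism with singular cohomology of $X_{F}(\C)$ and the Hodge-theoretic fact that $NS\hookrightarrow H^{2}(\,\cdot\,,\Z)$ has torsion-free cokernel over $\C$, whereas you stay entirely in the $\ell$-adic world and use the Kummer-sequence exact sequence
$0 \to NS(Y)\otimes\Z_{\ell} \to H^{2}_{\acute{e}t}(Y,\Z_{\ell}(1)) \to T_{\ell}(Br(Y)) \to 0$
together with torsion-freeness of the Tate module, exactly mirroring the $p$-adic argument of Lemma 4.2 in the paper. Your route is marginally more self-contained and avoids choosing an embedding $F\hookrightarrow\C$; the paper's is shorter because it can quote the classical Lefschetz $(1,1)$ statement. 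The one point you rightly flag --- compatibility of the specialization isomorphism with cycle classes, after enlarging $F$ so that all line bundles are defined over $F$ and extend to the model --- is standard and is also implicit (unproved) in the paper, so it does not constitute a gap relative to the paper's own level of detail.
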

     \begin{proof}
     We may assume
         $NS(X_{L}) = NS(X_{L} \otimes \bar{L})$.
         For a prime $l \neq p$, the canonical embedding
     $$NS(X_{L}) \otimes \Z _{l} \hookrightarrow H^{2}(X_{L}(\C) , \Z _{l}) \simeq H^{2}_{\acute{e}t}(X_{L} \otimes \bar{L}, \Z_{l})
     = H^{2}_{\acute{e}t}(X \otimes k , \Z _{l})$$
      factors through
     $$ NS(X_{L}) \otimes \Z_{l} \hookrightarrow NS(X) \otimes \Z_{l} \hookrightarrow H^{2}_{\acute{e}t}(X \otimes k, \Z_{l}).$$
     Since the embedding $NS(X) \hookrightarrow H^{2}(X_{L}(\C), \Z)$ is primitive,
     $(NS(X)/NS(X_{L})) \otimes \Z_{l}$ has no torsion.
     \end{proof}
     \begin{rem}
     In the above lemma, the quotient group $NS(X) / NS(X _{L})$ may have a non-trivial $p$-torsion.
     It is known that if the ramification index of $L$ is less than $p-1$, $NS(X)/NS(X_{L})$ is torsion-free (\cite{Ra}, Th\'{e}or\`{e}me 4.1.2).
     \end{rem}
     Let us fix an embedding
      $$\bar{\Q} \hookrightarrow \C \simeq \bar{K}.$$ On each number field $F$, there exists a unique finite place of residue characteristic $p$ associated to the embedding $\bar{\Q} \hookrightarrow \bar{K}$.
        For any lattice $M$, we denote the discriminant of $M$ by $d(M)$.

     \begin{lem}\label{red}
     Let $F$ be a number field and $X_{F}$ be a singular K3 surface defined over $F$. Let $\upsilon$ be the place of $F$ associated to the
     embedding $\bar{\Q} \hookrightarrow \bar{K}$.
     Then $X_{F}$ has a potentially good reduction at $\upsilon$.
     Let $k$ be an algebraically closed extension of the residue field of $\upsilon$ and
     $X$ be a good reduction of $X_{F}$ over $k$.
     We suppose
       $d(NS(X_{F} \otimes \bar{F}))$ is a non-zero square modulo $p$.
     Then $X$ is ordinary and
     $NS(X) = NS(X _{F} \otimes \bar{F})$.

     \end{lem}
     \begin{proof}
     By \cite{Ma}, Corollary 0.5, $X$ has a potentially good reduction at $\upsilon$.
     Since there is an embedding
     $$NS(X_{F} \otimes \bar{F}) \hookrightarrow NS(X),$$
     by the assumption,
     $X$ is supersingular of Artin invariant 1 or ordinary. Since $d(NS(X_{F} \otimes \bar{F}))$
     is non-zero square modulo $p$, there is no embedding of $NS(X_{F} \otimes \bar{F}) \otimes \Z _{p}$ into
     the N\'{e}ron-Severi group of a supersingular K3 surface (\cite{Sh2}, Theorem 1.1). Hence $X$ is ordinary.
     Since $X$ is ordinary, the Picard number of $X$ is 20 and
           $NS(X)=NS(X _{F} \otimes \bar{F})$ by Lemma \ref{pri}.
     \end{proof}

     \begin{cor}[\cite{Li}, Theorem 2.6]\label{mode}
     Every K3 surface $X$ over $k$ of Picard number 20 has a model over a finite field.
     \end{cor}
     \begin{proof}
     Assume $\mathbb{X}$ is a N\'{e}ron-Severi group preserving lifting of $X$. Then $X_{\bar{K}} = \mathbb{X} \otimes \bar{K}$ is a singular K3 surface and
     has a model over a number field. We assume $X'_{F}$ is a K3 surface defined over a number field $F$ such that $X'_{F} \otimes \bar{K}$ is isomorphic to
     $X_{\bar{K}}$. Let $\upsilon$ be the place of $F$ corresponding to the embedding $\bar{\Q} \hookrightarrow \bar{K}$.
     We may assume $X'_{F}$ has good reduction $X'_{\upsilon}$ at $\upsilon$ by lemma \ref{red}.
     Then $X$ is isomorphic to $X' _{\upsilon} \otimes k$.
     \end{proof}

     \begin{prop}
     Let $X_{F}$ and $X'_{F}$ be two K3 surfaces of Picard number 20 defined over a number field $F$.
     Let $\upsilon$ be a place of $F$ whose residue characteristic is $p>2$ and
      $k$ be an algebraically closed extension of the residue field $k _{\upsilon}$.
      Assume $d(NS(X _{F} \otimes \bar{F}))$ and $d(NS(X'_{F} \otimes \bar{F}))$ are not divisible by $p$.
     If the reduction of $X_{F}$ over $k$ is isomorphic to the reduction of $X'_{F}$ over $k$ and
     both are ordinary,
     $X_{F} \otimes \bar{F}$ is isomorphic to $X'_{F} \otimes \bar{F}$.
     \end{prop}
     \begin{proof}
     It is enough to show that
     $X_{F} \otimes \bar{K}$ is isomorphic to $X' _{F} \otimes \bar{K}$.
     We set $L = K F_{\upsilon} \subset \bar{K}$ and $V$ is the ring of integers of $L$.
     Assume $\mathbb{X}$ and $\mathbb{X}'$ are smooth integral models of
     $X_{F}$ and $X_{F}'$ over $V$ respectively.
     Let $X$ be the common special fiber of $\mathbb{X}$ and $\mathbb{X}'$.
     By the assumption, $X$ is ordinary and
     $$NS(X) = NS(X_{F}) = NS(X'_{F}).$$
     Since the Picard number of $X$ is 20, the deformation space of $X$ together with $NS(X)$ is formally smooth of zero-dimensional over $W$
     (\cite{LM}, Proposition 4.1). Therefore $\mathbb{X}$ is isomorphic to $\mathbb{X}'$ and
     $X_{F} \otimes \bar{K}$ is isomorphic to $X'_{F} \otimes \bar{K}$.
     \end{proof}

     We summarize all the above results to give a classification of K3 surfaces of Picard number 20 over $k$.
     Let $S_{p}$ be the set of isomorphic classes of positive definite even $\Z$-lattices of rank 2 such that the discriminant
     is a non-zero square modulo $p$.
     For each $M \in S_{p}$, there is a unique singular K3 surface $X_{M}$ over $\bar{\Q}$ such that
     $T(X_{M})$ is isomorphic to $M$ via the given embedding $\bar{\Q} \hookrightarrow \bar{K} \simeq \C$. Let $X_{k,M}$ be the reduction of $X_{M}$
     over $k$ given in the proof of Lemma 3.4.
     The correspondence $M \mapsto X_{k,M}$ is a bijection from $S_{p}$ to the set of isomorphic classes of K3 surfaces of Picard number 20 over $k$.
     \begin{thm} Let $k$ be an algebraically closed field of characteristic $p>2$.
    The isomorphism classes of K3 surfaces of Picard number 20 are classified by $S_{p}$.
      Every K3 surface of Picard number 20 over $k$ has a model over a finite field.

     \end{thm}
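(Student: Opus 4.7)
The plan is to combine the four preceding results (Lemmas 4.1, 4.2, 4.4 and Theorem 4.8, together with Corollary 4.5) to establish the claimed bijection. The last sentence of the theorem, that every K3 surface of Picard number 20 over $k$ has a model over a finite field, is exactly Corollary 4.5, so it requires no further work.

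First I would check that the assignment $M \mapsto X_{k,M}$ is well defined. For $M \in S_{p}$ the singular K3 surface $X_M$ exists and is defined over $\bar{\Q}$ by the Shioda--Inose classification. Via the fixed embeddings $\bar{\Q} \hookrightarrow \bar{\Q}_p \hookrightarrow \bar{K}$, one picks a place $\upsilon$ of a number field of definition of $X_M$ lying over $p$; Lemma 4.4 applies because $d(T(X_M))=d(M)$ is a non-zero square modulo $p$, and so (after passing to a finite extension) $X_M$ has good ordinary reduction at $\upsilon$. This reduction, base changed to $k$, gives the well-defined K3 surface $X_{k,M}$ of Picard number 20.

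Next I would prove surjectivity. Given a K3 surface $X$ of Picard number 20 over $k$, Corollary 4.5 produces a singular K3 surface $X'_F$ over a number field $F$ whose reduction at a place $\upsilon|p$ is $X$. Let $M = T(X'_F \otimes \bar{F}) \cong T(X_{T(X'_F \otimes \bar{F})})$; this is an even positive definite lattice of rank 2. Since $X$ is of finite height (being ordinary of Picard number 20), Lemma 4.4 forces $d(M)$ to be a non-zero square modulo $p$, so $M \in S_{p}$. By construction $X \cong X_{k,M}$, proving surjectivity.

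For injectivity, suppose $X_{k,M} \cong X_{k,M'}$ with $M,M' \in S_{p}$. Both are ordinary reductions (at suitable places over $p$) of the singular K3 surfaces $X_M$ and $X_{M'}$ defined over a common number field $F$. Theorem 4.8 then yields $X_M \otimes \bar{F} \cong X_{M'} \otimes \bar{F}$, and since isomorphism classes of singular K3 surfaces over $\C \cong \bar{K}$ are classified by their transcendental lattices (\cite{IS}), we conclude $M \cong M'$. The main obstacle in the argument is really already handled in the preceding theorem: Theorem 4.8 is what transfers an isomorphism in characteristic $p$ back to characteristic $0$, and everything here reduces to invoking it once the hypotheses (ordinarity, good reduction, the lattice conditions) have been verified via Lemmas 4.2 and 4.4.
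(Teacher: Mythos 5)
Your proposal is correct and is exactly the argument the paper intends: the paper states this theorem with no written proof beyond the phrase ``we summarize all the above results,'' and your assembly (well-definedness and surjectivity from the reduction lemma and the model-over-a-finite-field corollary, injectivity from the theorem transferring an isomorphism of ordinary reductions back to characteristic $0$ together with the Shioda--Inose classification) is precisely that summary made explicit. The only discrepancy is in your numbering of the cited results (the reduction lemma, the corollary on finite-field models, and the transfer theorem are Lemma 4.5, Corollary 4.6 and Theorem 4.7 in the paper's internal count), which does not affect the mathematics.
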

     \begin{rem}

      In \cite{Li}, in a similar argument, C.Liedtke proves that every K3 surface of Picard number 20 over a field of odd characteristic has a model over a finite field and has a Shioda-Inose type ``sandwich''.
       \end{rem}

\vskip 1cm

\noindent
J.Jang\\
Department of Mathematics\\
University of Ulsan \\
Daehakro 93, Namgu Ulsan 680-749, Korea\\ \\
jmjang@ulsan.ac.kr

     \end{document}